\theoremstyle{plain}
\newtheorem{theorem}{Theorem}[section]
\newtheorem{proposition}[theorem]{Proposition}
\newtheorem{lemma}[theorem]{Lemma}
\newtheorem{definition}[theorem]{Definition}
\newtheorem{example}[theorem]{Example}
\theoremstyle{remark}
\numberwithin{equation}{section}
\newcommand\ctranspose[1]{{#1}^t}
\newlength\cellsize \setlength\cellsize{15\unitlength}
\newcommand\cellify[1]{\def\thearg{#1}\def\nothing{}%
\ifx\thearg\nothing
\vrule width0pt height\cellsize depth0pt\else
\hbox to 0pt{\usebox2\hss}\fi%
\vbox to 15\unitlength{
\vss
\hbox to 15\unitlength{\hss$#1$\hss}
\vss}}
\newcommand\tableau[1]{\vtop{\let\\=\cr
\setlength\baselineskip{-16000pt}
\setlength\lineskiplimit{16000pt}
\setlength\lineskip{0pt}
\halign{&\cellify{##}\cr#1\crcr}}}
\newcommand\expath[1]{%
\hbox to 0pt{\usebox3\hss}%
\vbox to 15\unitlength{
\vss
\hbox to 15\unitlength{\hss$#1$\hss}
\vss}}
\newcommand\bas[1]{\omit \vbox to \cellsize{ \vss \hbox to \cellsize{\hss$#1$\hss} \vss}}
\begin{document}

\title[Chromatic classical symmetric functions]{Chromatic classical symmetric functions}

\author{Soojin Cho}
\address{
 Department of Mathematics,
 Ajou University,
 Suwon  443-749, Korea}
\email{chosj@ajou.ac.kr}

\author{Stephanie van Willigenburg}
\address{
 Department of Mathematics,
 University of British Columbia,
 Vancouver BC V6T 1Z2, Canada}
\email{steph@math.ubc.ca}

\thanks{
The first author was supported by the Basic Science Research Program through the National Research Foundation of Korea (NRF) funded by the Ministry of Education (NRF-2015R1D1A1A01057476).  The second author was supported in part by the National Sciences and Engineering Research Council of Canada.}
\subjclass[2010]{Primary 05E05; Secondary 05C15, 05C25}
\keywords{chromatic symmetric function, complete graph, elementary symmetric function, skew Schur function}

\begin{abstract}
In this note we classify when a skew Schur function is a positive linear combination of power sum symmetric functions. We then use this to determine precisely when any scalar multiple of a skew Schur function is the chromatic symmetric function of some graph. From here we are able to prove that of the classical bases for symmetric functions only certain scalar multiples of the elementary symmetric functions can be realised as the chromatic symmetric function of some graph, namely a particular union of complete graphs.
\end{abstract}

\maketitle

\section{Introduction}\label{sec:intro}

The areas of symmetric functions and chromatic functions both have a long and distinguished history. More recently  a chromatic symmetric function has been defined for any finite simple graph \cite{Stan95}. It naturally generalizes the chromatic function and its positivity when expanded into elementary symmetric functions, for example \cite{GebSag, GP, SW}, or Schur functions, for example \cite{Gasharov}, has been a keen avenue of research due to connections to other areas such as representation theory and algebraic geometry. Other avenues in this area include sufficient conditions for chromatic symmetric function equality \cite{Orellana} and the recent discovery that chromatic symmetric functions give rise to infinitely many bases for the algebra of symmetric functions \cite{ChovW}. Regarding the latter, the question has consequently been frequently raised whether the classical bases for the algebra of symmetric functions can be realised as chromatic symmetric functions, especially the elementary symmetric functions or Schur functions. In this note we answer this question comprehensively (Theorem~\ref{the:bases}) after first establishing which scalar multiples of skew Schur functions can be realised as chromatic symmetric functions (Theorem~\ref{the:skew}) that is reliant on establishing when they expand positively in terms of power sum symmetric functions (Proposition~\ref{prop:ppositive}).

\section{Chromatic classical bases}\label{sec:chromatic}

Before we prove our results, for completeness, we first recall classical combinatorial and algebraic concepts, which will be necessary later. Readers familiar with the subject may safely skip to Definition~\ref{def:XG}. A \emph{partition} $\lambda = (\lambda _1 , \ldots , \lambda _\ell)$ of $n$ is a list of weakly decreasing positive integers, or \emph{parts}, $\lambda _i$ whose sum is $n$, which we denote by $\lambda \vdash n$. We denote by $\emptyset$ the empty partition whose sum is 0. Given a partition $\lambda = (\lambda _1 , \ldots , \lambda _\ell)$ its corresponding \emph{diagram}, also denoted by $\lambda$, is the array of left-justified boxes with $\lambda _i$ boxes in row $i$, where we number the rows from top to bottom and the columns from left to right. Given two partitions $\lambda = (\lambda _1 , \ldots , \lambda _\ell)$ and $\mu = (\mu _1 , \ldots , \mu _k)$ where $ k\leq \ell$ and $\mu _i \leq \lambda _i$ for $1\leq i \leq k$ we say the \emph{skew diagram} $\lambda / \mu$ is the array of boxes
$$\lambda/\mu = \{ (i,j) \mid (i,j) \in \lambda \mbox{ and } (i,j) \not \in \mu \}.$$Note that if $\mu = \emptyset$ then $\lambda / \mu$ is the diagram $\lambda$. The number of boxes in $\lambda / \mu$ is called its \emph{size} and is denoted by $|\lambda / \mu|$. The \emph{transpose} of the skew diagram $\lambda / \mu$ is the array of boxes
$$\ctranspose{(\lambda/\mu)}  = \{ (j,i) \mid (i,j) \in \lambda \mbox{ and } (i,j) \not \in \mu \}.$$Two skew diagrams that will be of particular interest to us are horizontal and vertical strips. We say a skew diagram is a \emph{horizontal strip} if no two boxes lie in the same column, and is a \emph{vertical strip} if no two boxes lie in the same row. For simplicity we will often denote a skew diagram by $D$.

Given a skew diagram $\lambda / \mu$ we say that a \emph{semistandard Young tableau (SSYT)} of \emph{shape} $\lambda / \mu$ is a filling of the boxes of $\lambda / \mu$ with positive integers such that the entries in each row weakly increase when read from left to right, and the entries in each column strictly increase when read from top to bottom. Given an SSYT, $T$, with largest entry $max _T$ we define the \emph{content} of $T$ to be
$$c(T) = (c_1(T),   \ldots , c_{max _T}(T))$$where $c_i(T)$ is the number of times $i$ appears in $T$.

\begin{example}\label{ex:tabs}
Below on the left is the skew diagram $D=(6,4,4,1)/(3,2)$ and on the right its transpose $\ctranspose{D}$.
$$\tableau{&&&\ &\ &\ \\
&&\ &\ \\
\ &\ &\ &\ \\\ } \qquad 
\tableau{&&\ &\ \\
 &&\ \\
&\ &\ \\
\ &\ &\ \\
\ \\
\ }$$Note that $D$ is neither a horizontal nor a vertical strip. Below is an SSYT of shape $D$ and content $(5,3,1,1)$.
$$\tableau{&&&1 &1 &1 \\
&&1 &2 \\
1 &2 &2 &3 \\4 }$$
\end{example}

Given partitions and tableaux we can now turn our attention to symmetric functions, which will be the focus of our study, and whose algebra is a subalgebra of $\mathbb{C} [[ x_1, x_2, \ldots ]]$. Given a partition $\lambda = (\lambda _1, \ldots , \lambda _\ell)\vdash n \geq 1$ we define the \emph{monomial symmetric function} $m_\lambda$ to be
$$m _\lambda = \sum x^{\lambda _1} _{i_1} \cdots x^{\lambda _\ell} _{i_\ell}$$where the sum is over all $\ell$-tuples $(i_1, \ldots , i_\ell)$ of distinct indices that yield distinct monomials.  We define the $r$-th \emph{power sum symmetric function} to be
$p_r=m_{(r)}$ from which we define the \emph{power sum symmetric function} $p_\lambda$ to be
$$p_\lambda = p_{\lambda _1} \cdots p_{\lambda _\ell}.$$Meanwhile, we define the renowned \emph{Schur function} $s_\lambda$ to be
$$s_\lambda = \sum _{\mu \vdash n} K_{\lambda\mu} m_\mu$$where $K_{\lambda\mu}$ is the number of SSYTs of shape $\lambda$ and content $\mu$. Schur functions can further be generalised to \emph{skew Schur functions} $s_{D}$ for a skew diagram $D$
\begin{equation}\label{eq:sasskew}s_{D} = \sum _{\nu \vdash |D|} c  _{D\nu} s_\nu\end{equation}where $c  _{D\nu}$ is the number of SSYTs, $T$, of shape $D$ and content $\nu$ such that when the entries of $T$ are read from right to left from the top row to the bottom row, the number of $i$'s that have been read is always weakly greater than the number of $(i+1)$'s that have been read, for all $i\geq 1$. Note that the SSYT in Example~\ref{ex:tabs} satisfies this latter criterion. This rule for computing $c  _{D\nu}$ is called the \emph{Littlewood-Richardson rule}. Note that when $D = \lambda /\emptyset$ the Schur function $s_\lambda$ is recovered. We define the $r$-th \emph{complete homogeneous symmetric function} to be $h_{r}= s_{(r)}$, and
 if $D$ is a horizontal strip with $\alpha _i$ boxes in row $i$ for $1\leq i \leq \ell$ such that $\alpha _1, \ldots , \alpha _\ell$ in weakly decreasing order yields $\lambda _1 , \ldots , \lambda _\ell$ then  we define the \emph{complete homogeneous symmetric function} $h_\lambda$ to be
\begin{equation}\label{eq:hasskew}h_\lambda = s_D = h_{\alpha _1}\cdots h_{\alpha _\ell}= h_{\lambda _1}\cdots h_{\lambda _\ell}.\end{equation}We define the $r$-th \emph{elementary symmetric function} to be $e_{r}= s_{(1^r)}$ 
where $(1^r)$ is the partition consisting of $r$ parts equal to 1. Furthermore, $\ctranspose{D}$ is a vertical strip with $\alpha _i$ boxes in column $i$ for $1\leq i \leq \ell$ and we define the \emph{elementary symmetric function} $e_\lambda$ to be
\begin{equation}\label{eq:easskew}e_\lambda = s_{\ctranspose{D}} = e_{\alpha _1}\cdots e_{\alpha _\ell} = e_{\lambda _1}\cdots e_{\lambda _\ell}.\end{equation}Lastly, by convention
$$e_\emptyset = h_\emptyset = m_\emptyset = p_\emptyset = s_\emptyset = 1.$$Note also that
$$e_{(1)} = h_{(1)} = m_{(1)} = p_{(1)} = s_{(1)}$$and$$e_{(1^r)} = h_{(1^r)} =  p_{(1^r)}\qquad e_{(r)}=m_{(1^r)}=s_{(1^r)}$$for $r \geq 1$. The \emph{algebra of symmetric functions} is then the graded algebra
$$\Lambda = \Lambda ^0 \oplus \Lambda ^1 \oplus \cdots$$where a basis for the $n$-th graded part is given by any of $\{e_\lambda \} _{\lambda \vdash n}$, $\{h_\lambda \} _{\lambda \vdash n}$, $\{m_\lambda \} _{\lambda \vdash n}$, $\{p_\lambda \} _{\lambda \vdash n}$, $\{s_\lambda \} _{\lambda \vdash n}$. There exists a scalar product on $\Lambda$ such that the power sum symmetric functions form an orthogonal basis, that is, for partitions $\lambda$ and $\mu$
$$\langle p_\lambda , p_\mu \rangle = \delta _{\lambda\mu}z_\lambda$$while the Schur functions form an orthonormal basis, that is,
$$\langle s_\lambda , s_\mu \rangle = \delta _{\lambda\mu}$$where $\delta _{\lambda\mu}=1$ if $\lambda = \mu$ and 0 otherwise, and  $z_\lambda$ is a positive integer reliant on $\lambda$.
A further useful tool is the involution $\omega$ that satisfies$$\omega(s_D) = s_{\ctranspose{D}}$$for any skew diagram $D$. One last useful fact is that
\begin{equation}\label{eq:hrasp}
h_r=\sum _{\lambda \vdash r} z_\lambda ^{-1} p_\lambda .
\end{equation}

The symmetric functions above are not the only ones of interest to us. In \cite{Stan95} Stanley defined the chromatic symmetric function as follows, which is reliant on a graph that is finite and simple and the notion of a proper colouring, where given a graph $G$ with vertex set $V$ a proper colouring $\kappa$ of $G$ is a function
$$\kappa: V \rightarrow \{1,2,\ldots\}$$such that if $v_1, v_2 \in V$ are adjacent then $\kappa(v_1)\neq \kappa(v_2)$.

\begin{definition}\label{def:XG}  For a finite simple graph $G$ with vertex set $V=\{v_1, \dots, v_n\}$ and edge set $E$ the \emph{chromatic symmetric function of $G$} is defined to be
$$ X_G=\sum_{\kappa} x_{\kappa(v_1)}   \cdots x_{\kappa(v_n)}$$where the sum is over all proper colourings $\kappa$ of $G$.
\end{definition}

The chromatic symmetric function can equivalently be defined in terms of power sum or monomial symmetric functions with the aid of some natural partitions. Given a graph $G$ with vertex set $V= \{ v_1, \ldots , v_n\}$, edge set $E$, and a subset $S\subseteq E$, let $\lambda(S)$ be the partition of $n$ whose parts are equal to the number of vertices in the connected components of the spanning subgraph of $G$ with vertex set $V$ and edge set $S$. Meanwhile, a stable partition $\pi$ of the vertices of $G$ is a partition of $v_1, \ldots , v_n$ such that each block is totally disconnected. Let $\mu(\pi)$ be the partition of $n$ whose parts are equal to the number of vertices in each block of $\pi$.

\begin{lemma}\label{lem:pm}\cite[Proposition 2.4, Theorem 2.5]{Stan95} For a finite simple graph $G$ with vertex set $V$ and edge set $E$ we have the following.
\begin{enumerate}
\item $X_G = \sum _{S\subseteq E} (-1) ^ {|S|} p _{\lambda(S)}.$
\item $X_G = \sum _\pi (\pi _1 ! \pi _2 ! \cdots ) m_{\mu(\pi)}$ where the sum is over all stable partitions $\pi$ of $G$, and $\pi _i$ is the multiplicity of $i$ in $\mu(\pi)$.
\end{enumerate}
\end{lemma}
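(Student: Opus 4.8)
The plan is to prove both identities by reorganising the defining sum $X_G=\sum_{\kappa} x_{\kappa(v_1)}\cdots x_{\kappa(v_n)}$ over proper colourings in two different ways. For part (2) I would group the colourings according to the set partition of $V$ that they induce, and for part (1) I would pass to the sum over \emph{all} colourings and recover the proper ones by inclusion--exclusion over the edges that happen to be coloured monochromatically.

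For part (2), observe that every proper colouring $\kappa$ induces a partition of $V$ into colour classes, two vertices lying in the same block precisely when they receive the same colour. Since adjacent vertices receive distinct colours, each block is totally disconnected, so this induced partition is exactly a stable partition $\pi$, and conversely every stable partition arises this way. The monomial $x_{\kappa(v_1)}\cdots x_{\kappa(v_n)}$ depends only on the colours assigned to the blocks, contributing $\prod_{B} x_{c(B)}^{|B|}$, where $c(B)$ denotes the colour of block $B$ and the colours $c(B)$ are pairwise distinct. Summing this over all assignments of distinct colours to the blocks of a fixed $\pi$, I would recognise the result as a scalar multiple of $m_{\mu(\pi)}$: because blocks of equal size are interchangeable in the resulting monomial, each distinct monomial appearing in $m_{\mu(\pi)}$ is produced exactly $\pi_1!\pi_2!\cdots$ times, where $\pi_i$ is the number of blocks of size $i$. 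Summing over all stable partitions $\pi$ then yields $X_G=\sum_\pi(\pi_1!\pi_2!\cdots)\,m_{\mu(\pi)}$.

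For part (1), I would start instead from the sum over all colourings $\kappa\colon V\to\{1,2,\ldots\}$, proper or not. For each $\kappa$ let $B(\kappa)\subseteq E$ be the set of edges whose two endpoints share a colour, so that $\kappa$ is proper exactly when $B(\kappa)=\emptyset$. Using the elementary identity $\sum_{S\subseteq T}(-1)^{|S|}=0$ for $T\neq\emptyset$, I would rewrite the indicator that $\kappa$ is proper as $\sum_{S\subseteq B(\kappa)}(-1)^{|S|}$ and then interchange the order of summation to obtain
$$X_G=\sum_{S\subseteq E}(-1)^{|S|}\sum_{\kappa\,:\,S\subseteq B(\kappa)} x_{\kappa(v_1)}\cdots x_{\kappa(v_n)}.$$
A colouring satisfies $S\subseteq B(\kappa)$ exactly when it is constant on every connected component of the spanning subgraph of $G$ with edge set $S$; summing freely over one colour per component factors the inner sum as $\prod_C\bigl(\sum_i x_i^{|C|}\bigr)=\prod_C p_{|C|}=p_{\lambda(S)}$, by the very definition of $\lambda(S)$. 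This gives the claimed power sum expansion.

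I expect the main obstacle to be bookkeeping rather than anything conceptual. In (2) one must correctly identify the overcounting factor $\pi_1!\pi_2!\cdots$ that converts a sum over \emph{ordered} assignments of distinct colours to blocks into the monomial symmetric function $m_{\mu(\pi)}$, whose defining sum ranges only over distinct monomials. In (1) one must justify interchanging the (infinite) colour sum with the finite sum over $S\subseteq E$; this is harmless since every quantity in sight is a symmetric function of fixed degree $n$ with non-negative coefficients. Both points are routine once the correspondence between colourings and the combinatorial data $\pi$, respectively $S$, has been set up carefully.
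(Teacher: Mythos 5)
Your proof is correct. The paper itself offers no proof of this lemma --- it simply cites Stanley's 1995 paper --- and your two arguments (inclusion--exclusion over the set of monochromatic edges for the power sum expansion, and grouping proper colourings by their induced colour-class partition, with the $\pi_1!\pi_2!\cdots$ factor accounting for permutations of equal-sized blocks, for the monomial expansion) are precisely the standard proofs given in that source.
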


The chromatic symmetric function also satisfies the following two crucial properties, recalling that when we say a function is \emph{$p$-positive} (respectively, \emph{-negative}) we mean it is a positive (respectively, negative) linear combination of power sum symmetric functions.

\begin{lemma}\label{lem:prodrule} \cite[Proposition 2.3]{Stan95} If a finite simple graph $G$ is a disjoint union of subgraphs $G_1 \cup \cdots \cup G_\ell$ then $X_G = \prod _{i=1} ^\ell X_{G_i}$.
\end{lemma}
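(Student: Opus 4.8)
The plan is to argue directly from Definition~\ref{def:XG}, exploiting the fact that in a disjoint union no edge joins vertices of distinct subgraphs. First I would record the structural hypothesis: the vertex set $V$ of $G$ is partitioned as $V = V_1 \sqcup \cdots \sqcup V_\ell$, where $V_i$ is the vertex set of $G_i$, and the edge set $E$ is likewise the disjoint union of the edge sets $E_i$ of the $G_i$, with no edge of $G$ joining a vertex of $V_i$ to a vertex of $V_j$ for $i \neq j$.

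The key step is to establish a bijection between the proper colourings of $G$ and the tuples of proper colourings of the components. Given a colouring $\kappa : V \to \{1,2,\ldots\}$, I would let $\kappa_i$ denote its restriction to $V_i$. Because the adjacency condition in a proper colouring only ever compares the colours of two vertices lying in a common $G_i$, the colouring $\kappa$ is proper for $G$ if and only if each restriction $\kappa_i$ is proper for $G_i$. Hence $\kappa \mapsto (\kappa_1, \ldots, \kappa_\ell)$ is a bijection from the set of proper colourings of $G$ onto the product of the sets of proper colourings of the $G_i$.

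With this bijection in hand, the result follows from the distributive law together with the factorisation of each monomial, namely $\prod_{v \in V} x_{\kappa(v)} = \prod_{i=1}^\ell \prod_{v \in V_i} x_{\kappa_i(v)}$. Summing over all proper colourings of $G$, reindexing by the associated tuples, and regrouping the product then yields $X_G = \prod_{i=1}^\ell X_{G_i}$; one should note that all the functions involved are homogeneous of the fixed degree $|V|$, so no question of convergence arises in rearranging the sum.

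I do not anticipate a genuine obstacle here, as the argument is essentially a bookkeeping exercise; the only point requiring care is the justification that properness decomposes across the components, which is precisely where the hypothesis that $G$ is a disjoint union (no cross edges) is used. As an alternative I could instead start from the power sum expansion in Lemma~\ref{lem:pm}(1): a subset $S \subseteq E$ decomposes uniquely as $S = S_1 \sqcup \cdots \sqcup S_\ell$ with $S_i \subseteq E_i$, the connected components of the spanning subgraph on $(V,S)$ are the union of those of the $(V_i, S_i)$ so that $p_{\lambda(S)} = \prod_i p_{\lambda(S_i)}$, and $(-1)^{|S|} = \prod_i (-1)^{|S_i|}$; regrouping the resulting sum then gives the same product formula.
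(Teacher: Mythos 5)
Your argument is correct: the bijection between proper colourings of $G$ and tuples of proper colourings of the $G_i$, together with the factorisation of the monomial, is exactly the standard proof of this product formula. The paper itself gives no proof, citing Stanley's Proposition 2.3, and your direct argument (as well as your alternative via the power sum expansion) matches the standard reasoning there.
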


\begin{lemma}\label{lem:omega}\cite[Corollary 2.7]{Stan95} For any finite simple graph $G$, $\omega(X_G)$ is $p$-positive.
\end{lemma}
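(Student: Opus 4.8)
The plan is to start from the signed power-sum expansion of Lemma~\ref{lem:pm}(1) and track what $\omega$ does to it. First I would record how $\omega$ acts on power sums: the classical identity $\omega(p_r)=(-1)^{r-1}p_r$, which is obtained from $\omega(h_r)=\omega(s_{(r)})=s_{(1^r)}=e_r$ together with \eqref{eq:hrasp}, by comparing the generating functions $\sum_r h_r t^r=\exp\big(\sum_{k\geq1}p_kt^k/k\big)$ and $\sum_r e_rt^r=\exp\big(\sum_{k\geq1}(-1)^{k-1}p_kt^k/k\big)$. Since $\omega$ is an algebra homomorphism this upgrades to $\omega(p_\lambda)=(-1)^{|\lambda|-\ell(\lambda)}p_\lambda$, where $\ell(\lambda)$ is the number of parts of $\lambda$.

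Applying $\omega$ to Lemma~\ref{lem:pm}(1) then gives, with $n=|V|$,
$$\omega(X_G)=\sum_{S\subseteq E}(-1)^{|S|}(-1)^{n-c(S)}p_{\lambda(S)}=\sum_{S\subseteq E}(-1)^{b(S)}p_{\lambda(S)},$$
where $c(S)$ is the number of connected components of the spanning subgraph $(V,S)$ (so $\ell(\lambda(S))=c(S)$ and $|\lambda(S)|=n$), and $b(S)=|S|-n+c(S)\geq0$ is its cycle rank. At this point I would stress that positivity is \emph{not} term-by-term: the full triangle $K_3$ contributes a $-p_{(3)}$, and the coefficient of $p_{(3)}$ in $\omega(X_{K_3})$ only becomes positive (it equals $2$) after cancellation. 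So the content of the lemma is that, after collecting terms, every coefficient is nonnegative.

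To organize the cancellation I would group the subsets $S$ not by $\lambda(S)$ but by the set partition $\Pi(S)$ of $V$ into the vertex sets of the components of $(V,S)$. For a fixed set partition $\Pi$ with blocks $B_1,\dots,B_k$, the $S$ with $\Pi(S)=\Pi$ are exactly the disjoint unions $S=S_1\sqcup\cdots\sqcup S_k$ in which $S_j$ is a connected spanning edge set of the induced subgraph $G[B_j]$; for these, $b(S)=\sum_j(|S_j|-|B_j|+1)$ and $\lambda(S)=\mu(\Pi)$. Hence the contribution of $\Pi$ factors as $\big(\prod_j c(G[B_j])\big)\,p_{\mu(\Pi)}$, where for a graph $H$ on $m$ vertices I set
$$c(H)=\sum_{\substack{T\subseteq E(H)\\ (V(H),T)\ \mathrm{connected}}}(-1)^{|T|-m+1}.$$
Thus $\omega(X_G)=\sum_{\Pi}\big(\prod_j c(G[B_j])\big)\,p_{\mu(\Pi)}$, and $p$-positivity reduces to the single claim that $c(H)\geq0$ for every graph $H$.

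The claim $c(H)\geq0$ is the main obstacle, and I would settle it by deletion--contraction. If $H$ is disconnected then $c(H)=0$ and $c(\text{single vertex})=1$; if $H$ has a loop then the connected spanning sets pair off by inclusion/exclusion of that loop, so $c(H)=0$. Otherwise, fixing a non-loop edge $e$ and splitting the sum according to whether $e\in T$ yields the recursion $c(H)=c(H-e)+c(H/e)$: deletion preserves both the vertex count and the sets avoiding $e$, while contraction drops $m$ and $|T|$ each by one, matching the exponent. Both base values are nonnegative and the recursion is a sum of strictly smaller instances, so induction on $|E(H)|$ gives $c(H)\geq0$, which completes the proof. (Equivalently $c(H)=T_H(1,0)$, an evaluation of the Tutte polynomial whose nonnegativity is classical via the spanning-tree activity expansion, but the deletion--contraction induction is self-contained and avoids importing that machinery.)
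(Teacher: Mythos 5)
Your proof is correct, and it is genuinely a proof, whereas the paper offers none: Lemma~\ref{lem:omega} is simply quoted from Stanley's paper (his Corollary~2.7). Your sign bookkeeping is right ($\omega(p_\lambda)=(-1)^{|\lambda|-\ell(\lambda)}p_\lambda$ turns the sign $(-1)^{|S|}$ of Lemma~\ref{lem:pm}(1) into $(-1)^{b(S)}$ with $b(S)$ the cycle rank), the factorization of the contribution of a fixed set partition $\Pi$ into $\prod_j c(G[B_j])$ is valid because $\Pi(S)=\Pi$ forces $S$ to decompose blockwise into connected spanning edge sets, and the deletion--contraction recursion $c(H)=c(H-e)+c(H/e)$ for a non-loop edge $e$, with the disconnected, single-vertex, and loop base cases, does establish $c(H)\geq 0$ by induction on $|E(H)|$. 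For comparison, Stanley's own route is to express $X_G=\sum_{\pi\in L_G}\mu(\hat{0},\pi)\,p_{\mathrm{type}(\pi)}$ over the bond lattice $L_G$ and invoke the sign-alternation of the M\"obius function of a geometric lattice; your grouping of edge subsets by the partition $\Pi(S)$ is exactly a Whitney/crosscut computation of that M\"obius function, and your deletion--contraction induction (equivalently, nonnegativity of the Tutte evaluation $T_H(1,0)$) replaces the geometric-lattice machinery with an elementary, self-contained argument. The trade-off is length versus dependencies: the paper gets the lemma in one line by citation, while your version proves it from scratch using only Lemma~\ref{lem:pm}(1) and standard facts about $\omega$. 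One small presentational point: your notation $c(S)$ for the number of components collides with your later $c(H)$ for the signed sum; rename one of them if you write this up.
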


We are now close to establishing which classical symmetric functions are also the chromatic symmetric function of some graph. To this end we first deduce a natural result on skew Schur functions when expanded as a sum of power sum symmetric functions.

\begin{proposition}\label{prop:ppositive} A skew Schur function $s_D$ is $p$-positive if and only if $D$ is a horizontal strip. \end{proposition}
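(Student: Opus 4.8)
The plan is to prove the two directions separately, with the forward implication being essentially a bookkeeping exercise and the converse carrying all of the content.

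For the forward direction, suppose $D$ is a horizontal strip. Then by \eqref{eq:hasskew} we have $s_D = h_\lambda = h_{\lambda_1}\cdots h_{\lambda_\ell}$, and by \eqref{eq:hrasp} each factor $h_{\lambda_i} = \sum_{\mu\vdash\lambda_i} z_\mu^{-1} p_\mu$ is a nonnegative combination of power sums since every $z_\mu^{-1} > 0$. Because $p_\mu p_\nu = p_{\mu\cup\nu}$, the product of two $p$-positive functions is again $p$-positive (expand the product and group terms by the partition $\mu\cup\nu$). Hence $s_D = \prod_i h_{\lambda_i}$ is $p$-positive.

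For the converse I would argue the contrapositive: if $D$ is not a horizontal strip, then $s_D$ is not $p$-positive. The key device is the algebra homomorphism $\varphi\colon\Lambda\to\bC$ determined by $p_r\mapsto 1$ for all $r$, which is exactly the specialization $x_1=1$, $x_i=0$ for $i\geq 2$; it is well defined because the $p_\rho$ form a basis. Writing $s_D = \sum_\rho c_\rho\, p_\rho$, applying $\varphi$ gives $\varphi(s_D)=\sum_\rho c_\rho$, the sum of all power-sum coefficients. On the other hand, $\varphi(s_D)=s_D(1,0,0,\ldots)$ counts the SSYT of shape $D$ in which every entry equals $1$. When $D$ is not a horizontal strip, some column contains two boxes, and these cannot both be filled with $1$ under the strict-increase-down-columns condition; hence $\varphi(s_D)=0$, that is, $\sum_\rho c_\rho = 0$. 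Since $s_D\neq 0$, the $c_\rho$ are not all zero, so summing to $0$ forces at least one $c_\rho<0$, and $s_D$ fails to be $p$-positive.

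The main obstacle is precisely the location of a negative coefficient in the converse, and I expect the natural first attempts to stall. Fixing a single partition $\rho$ and computing $\langle s_D,p_\rho\rangle$ through the Murnaghan--Nakayama rule does not work uniformly: for $\rho=(|D|)$, for instance, any shape containing a $2\times 2$ block contributes coefficient $0$, so no single $\rho$ detects every non-horizontal strip. The resolution is to avoid identifying which coefficient is negative altogether; the specialization $p_r\mapsto 1$ shows the coefficients must sum to zero while not all vanishing, which forces a sign change. The only subtleties needing care are confirming that $\varphi$ is a genuine algebra map on $\Lambda$ and that $\varphi(s_D)$ equals the all-ones SSYT count, both of which follow immediately from $\{p_\rho\}$ being a basis together with the monomial (SSYT) expansion of $s_D$.
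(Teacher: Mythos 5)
Your proof is correct and is essentially the paper's argument in a different guise: your functional $\varphi$ (evaluation at $x_1=1$, $x_i=0$ for $i\geq 2$, i.e.\ $p_r\mapsto 1$) is exactly the linear functional $\langle \cdot\,, h_n\rangle=\langle \cdot\,, s_{(n)}\rangle$ used in the paper, and both proofs conclude by observing that the sum of the $p$-coefficients of $s_D$ vanishes while $s_D\neq 0$, forcing a negative coefficient. The only cosmetic difference is that you compute $\varphi(s_D)=0$ by counting all-ones SSYT directly, whereas the paper extracts the coefficient of $s_{(n)}$ via the Littlewood--Richardson rule and then applies orthogonality of the $p_\lambda$.
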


\begin{proof} Let $D$ be a horizontal strip with $\ell$ rows and $\alpha _i$ boxes in row $i$ for $1\leq i \leq \ell$. Then by Equation~\eqref{eq:hasskew} 
$$s_D = h _{\alpha _1}\cdots h_{\alpha _\ell}.$$We know by Equation~\eqref{eq:hrasp} that $h_r$ is $p$-positive, and hence so is 
$$h _{\alpha _1}\cdots h_{\alpha _\ell} = s_D.$$

Conversely, let $D$ contain two boxes in some column, $|D|=n$, and
$$s_D = \sum _{\lambda \vdash n} a _{D\lambda } p _\lambda .$$By the Littlewood-Richardson rule of Equation~\eqref{eq:sasskew} we have that the coefficient of $s_{(n)}$ in $s_D$ is 0, that is,
\begin{equation}\label{eq:sDsn}\langle s_D, s_{(n)} \rangle = 0.\end{equation}By Equation~\eqref{eq:hrasp} we know that
$$s_{(n)} = h_n=\sum _{\lambda \vdash n} z_\lambda ^{-1} p_\lambda$$and substituting into Equation~\eqref{eq:sDsn} we get
$$\langle \sum _{\lambda \vdash n} a _{D\lambda } p _\lambda , \sum _{\lambda \vdash n}z_\lambda ^{-1} p_\lambda \rangle = 0.$$Since the power sum symmetric functions are an orthogonal basis for $\Lambda$, we have that $a_{D\lambda } \not \geq 0$ for all $\lambda \vdash n$, and the result follows.
\end{proof}

With this proposition we now classify which skew Schur functions can be realised as the chromatic symmetric function of some graph, and key to our classification is the complete graph $K_n$, $n\geq 1$ consisting of $n$ vertices, each pair of which are adjacent.

\begin{theorem}\label{the:skew} $X_G = cs_D$ for some graph $G$ and $c\neq 0$ if and only if $D$ is a vertical strip. In particular, if $D$ is a vertical strip consisting of $\ell$ columns with $\alpha _i$ boxes in column $i$ for $1\leq i \leq \ell$ then
$$X_G = \left( \prod _{i=1} ^\ell \alpha _i !\right) s_D$$where
$$G=K_{\alpha _1} \cup \cdots \cup K _{\alpha _\ell}.$$
\end{theorem}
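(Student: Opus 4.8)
The plan is to prove both implications, using Proposition~\ref{prop:ppositive} together with the involution $\omega$ for the forward direction, and a direct computation of the chromatic symmetric function of a complete graph for the reverse direction.

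For the forward direction, suppose $X_G = cs_D$ with $c \neq 0$, and set $n = |D|$, which must equal the number of vertices of $G$ by comparing degrees. The key idea is to apply $\omega$: since $\omega(s_D) = s_{\ctranspose{D}}$, we get $\omega(X_G) = c\, s_{\ctranspose{D}}$, and by Lemma~\ref{lem:omega} this is $p$-positive. To convert this into a statement about $s_{\ctranspose{D}}$ alone I first need the sign of $c$. I would extract it by comparing the coefficient of $m_{(1^n)}$ on both sides of $X_G = cs_D$: on the left, Lemma~\ref{lem:pm}(2) (equivalently, directly counting the bijective colourings, all of which are automatically proper) shows this coefficient is $n! > 0$, while on the right it is the number of standard Young tableaux of shape $D$, which is a positive integer. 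Hence $c > 0$, so $s_{\ctranspose{D}}$ itself is $p$-positive, and Proposition~\ref{prop:ppositive} forces $\ctranspose{D}$ to be a horizontal strip, that is, $D$ to be a vertical strip.

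For the reverse direction, suppose $D$ is a vertical strip with $\ell$ columns and $\alpha_i$ boxes in column $i$. The plan is to compute $X_G$ for $G = K_{\alpha_1}\cup\cdots\cup K_{\alpha_\ell}$ factor by factor. For a single complete graph $K_m$, every proper colouring must assign $m$ distinct colours, so $X_{K_m} = m!\, m_{(1^m)} = m!\, e_m$, using $e_{(m)} = m_{(1^m)}$. Lemma~\ref{lem:prodrule} then gives $X_G = \prod_{i=1}^\ell X_{K_{\alpha_i}} = \left(\prod_{i=1}^\ell \alpha_i!\right) e_{\alpha_1}\cdots e_{\alpha_\ell}$. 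Finally, since $\ctranspose{D}$ is a horizontal strip with $\alpha_i$ boxes in row $i$, Equation~\eqref{eq:easskew} identifies $e_{\alpha_1}\cdots e_{\alpha_\ell} = s_D$, yielding the asserted formula with $c = \prod_{i=1}^\ell \alpha_i!$.

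The routine parts are the computation $X_{K_m} = m!\,e_m$ and the bookkeeping with $\omega$; the one genuinely delicate point is establishing that $c > 0$ in the forward direction, since without the correct sign the $p$-positivity of $\omega(X_G)$ would only tell us that $s_{\ctranspose{D}}$ is $p$-positive \emph{or} $p$-negative. The coefficient-of-$m_{(1^n)}$ comparison is what resolves this cleanly, and everything else then follows from Proposition~\ref{prop:ppositive} and Lemmas~\ref{lem:prodrule} and \ref{lem:omega}.
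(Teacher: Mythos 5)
Your proof is correct and follows essentially the same route as the paper: the reverse direction via $X_{K_m}=m!\,e_m$, Lemma~\ref{lem:prodrule} and Equation~\eqref{eq:easskew}, and the forward direction via $\omega$, Lemma~\ref{lem:omega} and Proposition~\ref{prop:ppositive}. The only divergence is that you pin down $c>0$ by comparing coefficients of $m_{(1^n)}$, whereas the paper notes that $p$-positivity of $cs_{\ctranspose{D}}$ forces $s_{\ctranspose{D}}$ to be $p$-positive or $p$-negative and rules out both; your extra step is a harmless (indeed slightly tidier) variant, since it lets you invoke only the statement of Proposition~\ref{prop:ppositive} rather than the fact, implicit in its proof, that such a skew Schur function is also never $p$-negative.
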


\begin{proof} Let $D$ be a vertical strip with $\ell$ columns containing $\alpha _i$ boxes in column $i$ for $1\leq i \leq \ell$. Then, since for $K_n$, $n\geq 1$ we have that \cite[Theorem 8]{ChovW}
$$X_{K_n} = n! e_n,$$by Equation~\eqref{eq:easskew} and Lemma~\ref{lem:prodrule} we obtain
$$s_D=\prod _{i=1} ^{\ell} e _{\alpha _i} = \prod _{i=1} ^{\ell} \frac{1}{\alpha _i !} X_{K_{\alpha _i}} = \left(\prod _{i=1} ^{\ell} \frac{1}{\alpha _i !}\right) X_{K_{\alpha _1} \cup \cdots \cup K_{\alpha _\ell}}.$$

Now let $D$ contain two boxes in some row, and assume that there exists a graph $G$ such that $X_G = cs_D$ for $c\neq 0$. Then by Lemma~\ref{lem:omega}
$$\omega(X_G)=\omega (cs_D) = cs_{\ctranspose{D}}$$is $p$-positive. Hence $s_{\ctranspose{D}}$ is either $p$-positive or $p$-negative and moreover $\ctranspose{D}$ contains two boxes in some column. By Proposition~\ref{prop:ppositive} this is not possible, and hence no such $G$ exists.
\end{proof}
We are now ready to establish which classical symmetric functions can be realised as the chromatic symmetric function of some graph.

\begin{theorem}\label{the:bases}Of the classical symmetric functions $\{e_\lambda \} _{\lambda \vdash n\geq 1}$, $\{h_\lambda \} _{\lambda \vdash n\geq 1}$, $\{m_\lambda \} _{\lambda \vdash n\geq 1}$, $\{p_\lambda \} _{\lambda \vdash n\geq 1}$ and $\{s_\lambda \} _{\lambda \vdash n\geq 1}$ only those that are scalar multiples of the elementary symmetric functions $\{e_\lambda \} _{\lambda \vdash n\geq 1}$ can be realised as the chromatic symmetric function of some graph. In particular, if $\lambda = (\lambda _1, \ldots, \lambda _\ell) \vdash n \geq 1$ then
$$e_\lambda= \left(\prod _{i=1} ^{\ell} \frac{1}{\lambda _i !}\right) X_{K_{\lambda _1} \cup \cdots \cup K_{\lambda _\ell}}.$$
\end{theorem}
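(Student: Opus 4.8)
The plan is to dispatch the five families one at a time, invoking Theorem~\ref{the:skew} for the three that are skew Schur functions and a direct coefficient comparison via Lemma~\ref{lem:pm} for the two that are not. Throughout I will use that $X_G$ is homogeneous of degree $|V(G)|$, so that any identity $X_G = cf$ forces $f$ to have the matching degree.

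First consider $e_\lambda$. By Equation~\eqref{eq:easskew} we have $e_\lambda = s_{\ctranspose{D}}$ where $\ctranspose{D}$ is a vertical strip whose columns carry $\lambda_1, \ldots, \lambda_\ell$ boxes. Theorem~\ref{the:skew} then applies verbatim and returns both the realisability of $e_\lambda$ and the asserted formula, with $G = K_{\lambda_1} \cup \cdots \cup K_{\lambda_\ell}$ and scalar $\prod_{i=1}^\ell (1/\lambda_i!)$.

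Next I treat the remaining skew Schur families $h_\lambda$ and $s_\lambda$. By Equation~\eqref{eq:hasskew}, $h_\lambda = s_D$ with $D$ a horizontal strip, while $s_\lambda = s_{\lambda/\emptyset}$ has diagram $D = \lambda$. In either case Theorem~\ref{the:skew} permits $X_G = cs_D$ only when $D$ is a vertical strip. A horizontal strip that is simultaneously a vertical strip, and likewise a straight shape $\lambda$ that is a vertical strip, each force every part to equal $1$, that is $\lambda = (1^n)$. Invoking the identities $h_{(1^n)} = e_{(1^n)}$ and $s_{(1^n)} = e_{(n)}$ recorded earlier then shows that the only realisable members of these families are themselves elementary symmetric functions.

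Finally I handle $m_\lambda$ and $p_\lambda$, and this is the step I expect to be the crux, since Theorem~\ref{the:skew} offers no leverage here: these are not skew Schur functions. The idea is to extract from $X_G$ a graph-independent coefficient that is nonzero for every graph yet survives in $cf$ only when $f$ is indexed by $(1^n)$. By Lemma~\ref{lem:pm}(1) the only $S \subseteq E$ with $\lambda(S) = (1^n)$ is $S = \emptyset$, so the coefficient of $p_{(1^n)}$ in $X_G$ is exactly $1$; by Lemma~\ref{lem:pm}(2) the only stable partition $\pi$ with $\mu(\pi) = (1^n)$ is the partition into singletons, so the coefficient of $m_{(1^n)}$ in $X_G$ is exactly $n!$. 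Hence if $cp_\mu = X_G$ (respectively $cm_\mu = X_G$), then comparing the $p_{(1^n)}$- (respectively $m_{(1^n)}$-) coefficient forces $\mu = (1^n)$, and $p_{(1^n)} = e_{(1^n)}$ while $m_{(1^n)} = e_{(n)}$, so once more only elementary symmetric functions arise. Everything outside this last paragraph reduces to Theorem~\ref{the:skew} together with the coincidences $h_{(1^n)} = p_{(1^n)} = e_{(1^n)}$ and $m_{(1^n)} = s_{(1^n)} = e_{(n)}$.
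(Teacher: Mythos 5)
Your proposal is correct and follows essentially the same route as the paper's proof: the paper also dispatches $\{p_\lambda\}$ and $\{m_\lambda\}$ via Lemma~\ref{lem:pm} and the remaining three families via Equations~\eqref{eq:sasskew}, \eqref{eq:hasskew}, \eqref{eq:easskew} together with Theorem~\ref{the:skew}, after setting aside the coincidences $e_{(1^n)}=h_{(1^n)}=p_{(1^n)}$ and $e_{(n)}=m_{(1^n)}=s_{(1^n)}$. You merely spell out the coefficient extraction of $p_{(1^n)}$ and $m_{(1^n)}$ that the paper leaves as ``follows immediately.''
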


\begin{proof} Since $e_{(1^n)} = h_{(1^n)} = p_{(1^n)}$ for $n\geq 1$ we consider this to be $e_{(1^n)}$ for convenience, and likewise consider $e_{(n)}=m_{(1^n)}=s_{(1^n)}$ for $n\geq 1$ to be $e_{(n)}$. The result for $\{p_\lambda \} _{\lambda \vdash n\geq 1}$ and $\{m_\lambda \} _{\lambda \vdash n\geq 1}$ follows immediately from Lemma~\ref{lem:pm}. Meanwhile, the result for  $\{s_\lambda \} _{\lambda \vdash n\geq 1}$ and $\{h_\lambda \} _{\lambda \vdash n\geq 1}$ follows from Equations~\eqref{eq:sasskew} and \eqref{eq:hasskew} combined with Theorem~\ref{the:skew}. Equation~\eqref{eq:easskew} combined with Theorem~\ref{the:skew} then yields the result for $\{e_\lambda \} _{\lambda \vdash n\geq 1}$.
\end{proof}

\section*{Acknowledgements}\label{sec:acknow} The authors would like to thank Richard Stanley and Vasu Tewari for helpful conversations, the referee, and Ajou University where some of the research took place.




\begin{thebibliography}{10}


\bibitem{ChovW} 
\textsc{Cho, S.} and \textsc{van Willigenburg, S.} (2016).
Chromatic bases for symmetric functions. 
\textit{Electron. J. Combin.} 
\textbf{23} P1.15 6pp.

\bibitem{Gasharov} 
\textsc{Gasharov, V.} (1996).
Incomparability graphs of $(3 + 1)$-free posets are
$s$-positive. 
\textit{Discrete Math.} 
\textbf{157} 193--197.

\bibitem{GebSag} 
\textsc{Gebhard, D.} and \textsc{Sagan, B.}  (2001).
A chromatic symmetric function in noncommuting variables.
\textit{J. Algebraic Combin.} 
\textbf{13} 227--255. 

\bibitem{GP} 
\textsc{Guay-Paquet, M.} 
A modular law for the chromatic symmetric functions of $(3+1)$-free posets. 
{\tt arXiV:1306.2400v1}

\bibitem{Orellana} 
\textsc{Orellana, R.} and \textsc{Scott, G.} (2014).
Graphs with equal chromatic symmetric function. 
\textit{Discrete Math.} 
\textbf{320} 1--14.

\bibitem{SW} 
\textsc{Shareshian, J.} and \textsc{Wachs, M.} 
Chromatic quasisymmetric functions. 
\textit{Adv. Math.} 
to appear.

\bibitem{Stan95} 
\textsc{Stanley, R.} (1995).
A symmetric function generalization of the chromatic polynomial of a graph.
\textit{Adv. Math.} 
\textbf{111} 166--194.

\end{thebibliography}
\end{document}